\definecolor{dark-red}{rgb}{.54,.0,.0}
\definecolor{dark-green}{rgb}{.0,.4,.0}
\definecolor{dark-blue}{rgb}{.04,.04,.4}
\newtheorem{theorem}{Theorem} 
\newtheorem{e-proposition}[theorem]{Proposition}
\newtheorem{corollary}[theorem]{Corollary}
\newtheorem{e-definition}[theorem]{D\'efinition\rm}
\newtheorem{remark}{\it Remark\/}
\newcommand{\Rb}{{\mathbb{R}}}
\newcommand{\cal}{\mathcal}
\newcommand{\nc}{\newcommand}
\renewcommand{\phi}{\varphi}
\nc{\txt}{\textstyle}
\nc{\be}{\begin{equation}}
\nc{\ee}{\end{equation}}
\nc{\ba}{\begin{eqnarray}}
\nc{\ea}{\end{eqnarray}}
\nc{\bas}{\begin{eqnarray*}}
\nc{\eas}{\end{eqnarray*}}
\nc{\weak}{\rightharpoonup}
\nc{\paq}{$({\cal{P}})_{\alpha,q}$}
\nc{\paz}{$({\cal{P}})_{\alpha,\tz}$}
\nc{\Om}{\Omega}
\nc{\ek}{{\eps_{k}}}
\nc{\ak}{{\alpha_{k}}}
\nc{\ck}{{C_{k}}}
\nc{\ct}{\tilde{C}}
\nc{\cc}{\hat{C}}
\nc{\uk}{{u_{k}}}
\nc{\Uk}{{U_{k}}}
\nc{\ts}{{2^*}}
\nc{\tsu}{{{2^*}-1}}
\nc{\tsd}{{{2^*}-2}}
\nc{\tst}{{{2^*}-3}}
\nc{\tz}{{2^\#}}
\nc{\tzu}{{{2^\#}-1}}
\nc{\tzd}{{{2^\#}-2}}
\nc{\wk}{{w_{k}}}
\nc{\sndd}{\frac{S^\frac N2}2}
\nc{\sddn}{\frac{S}{2^{\frac 2N}}}
\nc{\igukw}{{\int\nabla\Uk\cdot\nabla\wk}}
\nc{\iukw}{{\int U_k\wk}}
\nc{\iuks}{{|U_k|_{2^*}^{2^*}}}
\nc{\iukz}{{|U_k|_{2^\#}^{2^\#}}}
\nc{\iukpz}{{|u_k|_{2^\#}^{2^\#}}}
\nc{\iusu}{{\int\ U_k^{{2^*}-1} w_k}}
\nc{\iusd}{{\int\ U_k^{{2^*}-2} w_k^2}}
\nc{\iust}{{\int\ U_k^{{2^*}-3} w_k^3}}
\nc{\iuzu}{{\int\ U_k^{{2^\#}-1} |w_k|}}
\nc{\iuzd}{{\int\ U_k^{{2^\#}-2} w_k^2}}
\nc{\iuz}{{|u|_{\tz}^\tz}}
\nc{\ius}{{|u|_{\ts}^\ts}}
\nc{\nguk}{{|\nabla\Uk|_{2}}}
\nc{\ngukq}{{|\nabla\Uk|_{2}^2}}
\nc{\ngwkq}{{|\nabla\wk|_{2}^2}}
\nc{\nuk}{{|\Uk|_{2}}}
\nc{\nukq}{{|\Uk|_{2}^2}}
\nc{\nuks}{{|\Uk|_{\ts}}}
\nc{\nuksq}{{|\Uk|_{\ts}^2}}
\nc{\nukss}{{|\Uk|_{\ts}^\ts}}
\nc{\nuksmd}{{|\Uk|_{\ts}^{-2}}}
\nc{\nwk}{{||\wk||}}
\nc{\nwks}{{|\wk|_{\ts}}}
\nc{\nwksq}{{|\wk|_{\ts}^2}}
\nc{\ndwkq}{{|\wk|_{2}^2}}
\nc{\nwkq}{{||\wk||^2}}
\nc{\nwkr}{{||\wk||^r}}
\nc{\ngukpq}{{|\nabla u_{k}|_{2}^2}}
\nc{\ngupq}{{|\nabla u|_{2}^2}}
\nc{\nukps}{{|u_{k}|_{\ts}}}
\nc{\nukpss}{{|u_{k}|_{\ts}^\ts}}
\nc{\nups}{{|u|_{\ts}}}
\nc{\nupss}{{|u|_{\ts}^\ts}} 
\nc{\nupsz}{{|u|_{\ts}^\tz}} 
\nc{\nukpsq}{{|u_{k}|_{\ts}^2}}
\nc{\nupsq}{{|u|_{\ts}^2}}
\nc{\nupq}{{|u|_{2}^2}}
\nc{\nukpq}{{|u_k|_{2}^2}}
\nc{\ql}{\frac{\ngukq}{|\Uk|_{\ts}^{2^*}}}
\nc{\beuk}{\frac{\ngukq+a\nukq}{\nuksq}}
\nc{\nhu}{||u||^2}
\nc{\pnhu}{||u||^2}
\nc{\pnhumeio}{||u||}
\nc{\nhukpq}{||u_{k}||^2}
\nc{\pnhukpq}{||u_{k}||^2}
\nc{\pnhukpqmeio}{||u_{k}||}
\nc{\nhukq}{||U_{k}||^2}
\nc{\pnhukq}{||U_{k}||^2}
\nc{\nhwkq}{||w_{k}||^2}
\nc{\pnhwkq}{||w_{k}||^2}
\nc{\ttt}{{\Rb^{N}_{+}}}
\nc{\ue}{U_{\eps}}
\nc{\pb}{\bar{\phi}_{\eps}}
\nc{\pbq}{\bar{\phi}_{\eps}^2}
\nc{\pk}{{\phi}_{\eps}}
\nc{\uz}{U^\tzd_{\eps}}
\nc{\uzz}{U^\tzd_{\eps,y_{\eps}}}
\nc{\us}{U^\tsd_{\eps}}
\nc{\uss}{U^\tsd_{\eps,y_{\eps}}}
\nc{\et}{\eta_{\eps}}
\nc{\etq}{\eta_{\eps}^2}
\nc{\pt}{{\tilde\phi}}
\nc{\pte}{{\tilde{\phi}_{\eps}}}
\nc{\ptq}{{{\tilde\phi}^2}}
\nc{\pteq}{\tilde{\phi}_{\eps}^2}
\nc{\ome}{{\Om_{\eps}}}
\nc{\mk}{M_{k}}
\nc{\dk}{\delta_{k}}
\nc{\pkk}{P_{k}}
\nc{\ndd}{{\frac{N-2}{2}}}
\nc{\ium}{U_{\delta_{k},P_{k}}}
\nc{\vk}{{v_{k}}}
\nc{\gb}{\gamma+\sqrt{\gamma^2+4\beta}}
\nc{\gbb}{\gamma_{\infty}+\sqrt{\gamma_{\infty}^2+4\beta_{\infty}}}
\nc{\ia}{\frac{1}{4(\tz)^{\frac{2}{N}}}
         \left[\left(\gb\right)^{N}+2\cdot\ts\beta\left(\gb\right)^{N-2}
         \right]^{\frac{2}{N}}}
\nc{\iab}{\frac{1}{4(\tz)^{\frac{2}{N}}}
         \left[\left(\gbb\right)^{N}+2\cdot\ts\beta_{\infty}\left(\gbb\right)^{N-2}
         \right]^{\frac{2}{N}}}
\nc{\aaa}{\mbox{{\sl a}}}
\nc{\bg}{\mbox{\b{$\gamma$}}}
\nc{\bb}{\mbox{\b{$\beta$}}}
\nc{\zs}{{\frac{\tz}{\ts}}}
\nc{\sz}{{\frac{\ts}{\tz}}}
\nc{\ds}{{\frac{2}{\ts}}}
\nc{\gx}{\bg x^{\zs}}
\nc{\sd}{S_{2}}
\nc{\gxq}{\bg^2 x^{2\zs}}
\nc{\az}{S^{\frac N2}\frac{A(N)}{B(N)}\max_{\partial\Om}H}
\nc{\azz}{A(N)\max_{\partial\Om}H}
\nc{\bd}{\mbox{\b{$\delta$}}}
\nc{\dd}{\frac{1}{2}\alpha\delta}
\nc{\tzud}{\frac{\tz+1}{2}}
\nc{\tzdois}{\frac{\tz}{2}}
\nc{\lb}{\left(}
\nc{\rb}{\right)}
\nc{\defdel}{\frac{\iuz}{\pnhumeio\cdot|u|_{\ts}^{\ts\!/2}}}
\nc{\intum}{\int(\nabla u\cdot\nabla\phi+au\phi)}
\nc{\intdois}{\int u^\tzu\phi}
\nc{\inttres}{\int u^{\tsu}\phi}
\nc{\cs}{{\cal{S}}}
\nc{\sddnd}{S/2^{\frac{2}{N}}}   
\newcounter{um}
\newcommand{\eps}{\varepsilon}
\begin{document}

\title{A sharp inequality for Sobolev functions}
\subjclass[2000]{46E35, 35J65}

\author{Pedro M.\ Gir\~{a}o}

\address{Mathematics Department, Instituto Superior T\'{e}cnico, Av.\ 
    Rovisco Pais, 1049-001 Lisbon, Portugal}
\email{pgirao@math.ist.utl.pt}

\begin{abstract}
Let $N\geq 5$, $a>0$, 
        $\Om$ be a smooth bounded domain in $\Rb^{N}$,
$\ts=
\frac{2N}{N-2}$, 
$\tz=\frac{2(N-1)}{N-2}$ and $||u||^2=\ngupq+a\nupq$.  We prove there exists an 
$\alpha_{0}>0$ such that, for all $u\in 
H^1(\Om)\setminus\{0\}$,
    $$\sddn\leq\frac\nhu\nupsq\lb1+\alpha_{0}
    \frac{\iuz}{\pnhumeio\cdot|u|_{\ts}^{\ts\!/2}}\rb.$$ 
    This inequality implies Cherrier's inequality. 
\end{abstract}

\maketitle


\noindent Let $N\geq 5$, $a>0$, $\alpha\geq 0$, $\Om$ be a smooth 
bounded domain in $\Rb^{N}$,
$\ts=
\frac{2N}{N-2}$ 
and $\tz=\frac{2(N-1)}{N-2}$. 
We regard $a$ as fixed and 
$\alpha$ as a parameter.
Denote the $L^p$ and $H^1$ norms of $u$ in $\Om$ by
$$|u|_{p}:=\left(\textstyle\int|u|^p\right)^\frac{1}{p}\qquad  
\mbox{and}\qquad
||u||:=\left(\ngupq+a\nupq\right)^\frac{1}{2}\!\!,$$
respectively.
All our integrals are over $\Om$. We define the 
functional $\delta:H^1(\Om)\setminus\{0\}\to\Rb$, 
homogeneous of degree zero,
by
$$
\delta(u):=
\frac{\iuz}{\pnhumeio\cdot|u|_{\ts}^{\ts\!/2}}
$$
and consider the system
\addtocounter{equation}{+1}
\setcounter{um}{\theequation}
\nc{\rf}{$(\theum)_{\alpha}$}
\nc{\rfk}{$(\theum)_{\alpha_{k}}$}
\nc{\rfz}{$(\theum)_{\alpha_{0}}$}
$$
\left\{\begin{array}{ll}
\lb 1+\dd(u)\rb(-\Delta u+au)+\tzdois\alpha u^\tzu=\lb 1+
\tzud\alpha\delta(u)\rb u^\tsu&\mbox{in\ }\Om,\\
u>0&\mbox{in\ }\Om,\\
\frac{\partial u}{\partial\nu}=0&\!\!\!\!\mbox{on\ }\partial\Om.
\end{array}\right.\eqno{(\theequation)_{\alpha}}
$$

We claim that the solutions of \rf\ correspond to critical points of 
the functional $\Phi_{\alpha}:H^1(\Om)\setminus\{0\}\to\Rb$, defined by
\be\label{phi}
\Phi_{\alpha}(u):=\lb
\frac 12\nhu-\frac{1}{\ts}\ius
\rb\lb1+\alpha\defdel\rb^{\frac{N}{2}}=
\Phi_{0}(u)(1+\alpha\delta(u))^{\frac{N}{2}}.
\ee
In fact, since 
$$
\Phi_{\alpha}^\prime=(1+\alpha\delta)^{\frac{N}{2}-1}\left[
\Phi_{0}^\prime(1+\alpha\delta)+{\textstyle \frac{N}{2}}
\Phi_{0}\alpha\delta'
\right]
$$
and, for $\phi\in H^1(\Om)$, 
\begin{eqnarray*}
\delta'(u)(\phi)&=&
-
\frac{\delta(u)}{\nhu}\intum
+\tz\frac{\delta(u)}{\iuz}
\int (|u|^\tzd u\phi)\\
&&-\frac{\ts}{2\,}
\frac{\delta(u)}{\ius}\int (|u|^{\tsd}u\phi),
\end{eqnarray*}
the critical points of $\Phi_{\alpha}$ satisfy
$$\begin{array}{rcl}\textstyle
(-\Delta u+au)\lb 1+\frac{4-N}{4}\alpha\delta(u)+\frac{N-2}{4}
\frac{\ius}{\nhu}\alpha\delta(u)\rb &&\\
+\
\frac{\tz N}{2}\alpha |u|^\tzd u\lb\frac 12\frac{||u||}{|u|_{\ts}^{\ts\!/2}}-
\frac{1}{\ts}\frac{|u|_{\ts}^{\ts\!/2}}{||u||}\rb&&\\
-\ |u|^\tsd u\lb 1+\frac{4-N}{4}\alpha\delta(u)+\frac{\ts 
N}{8}\frac{\nhu}{\ius}\alpha\delta(u)\rb&=&0
\end{array}
$$
in $\Om$, $\lb\frac{\partial u}{\partial\nu}=0\mbox{\ on\ 
}\partial\Om\rb$.
However, multiplying this equation by $u$ and integrating over $\Om$
(i.e.\ differentiating (\ref{phi}) along the radial 
direction)
we get $||u||^2=\ius$.  Conversely, the solutions of \rf\ are 
solutions of the previous equation\!\!: multiplying \rf\ by $u$ and integrating 
over $\Om$ we get a quadratic equation in $||u||/|u|_{\ts}^{\ts\!/2}$, whose 
solution is $||u||/|u|_{\ts}^{\ts\!/2}=1$.
This proves our claim.

The functional $\Phi_{\alpha}$ restricted to the Nehari manifold,
$${\cal{N}}:=\left\{u\in H^1(\Om)\setminus\{0\}:
\Phi_{\alpha}^\prime(u)u=0\right\}
=\left\{u\in H^1(\Om)\setminus\{0\}:||u||^2=\ius\right\},$$
is
$
\frac 1N[\beta(1+\alpha\delta)]^\frac{N}{2},
$
where $\beta:H^1(\Om)\setminus\{0\}\to\Rb$ is defined by
$$
\beta(u):=\frac\nhu\nupsq.
$$
So, we consider the functional 
$\Psi_{\alpha}:H^1(\Om)\setminus\{0\}\to\Rb$,
defined by
$$\Psi_{\alpha}:=\beta(1+\alpha\delta).
$$
A {\em 
least energy\/} solution of 
\rf\ is a function $u\in H^1(\Om)\setminus\{0\}$, such that
$$
\Phi_{\alpha}(u)=\inf_{{\cal{N}}}\Phi_{\alpha}=\inf_{H^1(\Om)\setminus\{0\}}
{\textstyle\frac{1}{N}} (\Psi_{\alpha})^\frac{N}{2}.
$$
We are interested in proving existence and nonexistence of least 
energy solutions of \rf.  We note that
every critical point of $\Phi_{\alpha}$ is a 
critical point of $\Psi_{\alpha}$.  
It is easy to check that the Nehari manifold is a 
natural constraint for $\Phi_{\alpha}$.  So
conversely, if $u$ is a critical point of
$\Psi_{\alpha}$, then 
there exists a unique $t(u)>0$, such that 
$t(u)u$ is a critical point of
$\Phi_{\alpha}$ $\left( 
t(u)=\left(\nhu/|u|_{\ts}^{\ts}\rb^\frac{N-2}{4}\right)$.

We consider the minimization 
problem corresponding to
$$
S_{\alpha}:=\inf\left\{\Psi_{\alpha}(u)|u\in 
H^1(\Om)\setminus\{0\}\right\}.
$$
We recall that 
$
S:=\inf\left\{\left.\frac{|\nabla u|_{L^2(\Rb^N)}^2}{|u|_{L^{\ts}\!(\Rb^N)}^2}\right|u\in
L^\ts(\Rb^{N}), \nabla u\in L^2(\Rb^{N}),
u\neq 0\right\}
$
is achieved by the instanton
$
U(x):=\left(\frac{N(N-2)}{N(N-2)+|x|^2}\right)^{\frac{N-2}{2}}
$. 
Our main result is
\begin{theorem}\label{theorem}
    There exists a positive real number
$$\alpha_{0}=\alpha_{0}(a,\Om)=
\min\left\{\alpha\,|\;S_{\alpha}=S/2^\frac{2}{N}\right\}$$ such that
\begin{enumerate}
    \item[(i)] if $\alpha<\alpha_{0}$, then
    \rf\ has a least energy solution $u_{\alpha}$;
    \item[(ii)] if 
    $\alpha>\alpha_{0}$, then \rf\ does not have a least energy 
    solution and
    $
    \Psi_{\alpha}\geq S/2^\frac{2}{N}.
    $
The constant $ S/2^\frac{2}{N}$ is sharp.
\end{enumerate}
\end{theorem}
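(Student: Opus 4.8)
The plan is to route everything through the minimization value $S_\alpha=\inf\Psi_\alpha$, exploiting the correspondence from the excerpt: \rf\ has a least energy solution exactly when $S_\alpha$ is attained, since a minimizer $u$ of $\Psi_\alpha$ produces the critical point $t(u)u$ of $\Phi_\alpha$ on the Nehari manifold. First I would record two structural facts about $S_\alpha$. Because $\delta\ge0$ on $H^1(\Om)\setminus\{0\}$, the functional $\Psi_\alpha=\beta(1+\alpha\delta)$ is pointwise nondecreasing in $\alpha$, so $\alpha\mapsto S_\alpha$ is nondecreasing; an equi-Lipschitz estimate on near-minimizers makes it continuous. Moreover, since $a>0$ the embedding $H^1(\Om)\hookrightarrow L^\ts(\Om)$ gives $\beta\ge 1/C>0$, so on $L^\ts$-normalized minimizing sequences $\nhu$ stays bounded above and below. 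The theorem then amounts to locating where the nondecreasing continuous curve $\alpha\mapsto S_\alpha$ meets the ceiling $\sddn$.

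Next I would prove the ceiling $S_\alpha\le\sddn$ for every $\alpha$ using truncated instantons $u_\eps=\pk\,U_{\eps,y_\eps}$ concentrating at a boundary point $y_\eps\in\partial\Om$. A scaling computation gives $\beta(u_\eps)\to\sddn$ (the half-space value; the term $a|u_\eps|_2^2=O(\eps^2)$ is lower order precisely because $N\ge5$), while $\iuz$ scales like $\eps$ since $\tz<\ts$, whence $\delta(u_\eps)=O(\eps)\to0$ and $\Psi_\alpha(u_\eps)\to\sddn$. Granting $0<\alpha_0<\infty$ for the threshold $\alpha_0:=\sup\{\alpha:S_\alpha<\sddn\}$ (addressed below), both halves of the theorem follow. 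For (i), when $\alpha<\alpha_0$ one has the strict inequality $S_\alpha<\sddn$; I would run Lions' concentration-compactness on an $L^\ts$-normalized minimizing sequence $u_n\weak u$, using Brezis--Lieb to split $\nupsq$ and $\nhu$ into a weak-limit part and a vanishing part, the latter obeying the sharp subadditivity bound with the \emph{boundary} constant $\sddn$. The strict inequality then forbids any mass escaping into a bubble, forcing $u_n\to u$ strongly; replacing $u$ by $|u|$, scaling onto the Nehari manifold, and invoking the strong maximum principle and the Hopf lemma yields a positive least energy solution. For (ii), when $\alpha>\alpha_0$ monotonicity and the ceiling force $S_\alpha=\sddn$; were this attained by some $u$, then since $\delta(u)>0$ and $\alpha_0<\alpha$ we would get $\Psi_{\alpha_0}(u)=\beta(u)\lb1+\alpha_0\delta(u)\rb<\beta(u)\lb1+\alpha\delta(u)\rb=\sddn=S_{\alpha_0}$, contradicting $S_{\alpha_0}=\inf\Psi_{\alpha_0}$. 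Hence no least energy solution exists, $\Psi_\alpha\ge\sddn$, and the bound is sharp because the boundary bubbles approach it.

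For the positivity $\alpha_0>0$ I would sharpen the test-function expansion at $\alpha=0$: choosing $y_\eps$ at a point of positive mean curvature of $\partial\Om$, which exists because $\Om$ is bounded, the next-order term of $\beta(u_\eps)$ is $-c_NH(y_\eps)\eps+o(\eps)$ with $c_N>0$, dominating the $O(\eps^2)$ contribution of the $a$-term; thus $S_0\le\beta(u_\eps)<\sddn$ for small $\eps$, and by continuity $\alpha_0>0$.

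The hard part is the finiteness $\alpha_0<\infty$, equivalently the sharp inequality $\Psi_\alpha\ge\sddn$ for $\alpha$ large. I would argue by contradiction: if $S_k<\sddn$ for $\alpha=k\to\infty$, part (i) supplies minimizers $u_k$ (normalized by $|u_k|_\ts=1$), the ceiling and $\beta\ge1/C$ force $k\,\delta(u_k)$ bounded, hence $\delta(u_k)\to0$, and since $\tz<\ts$ makes $H^1\hookrightarrow L^\tz$ compact, $\iuz\to0$, $u_k\weak0$, i.e.\ $u_k$ blows up. The delicate step is the blow-up analysis showing $u_k$ is asymptotically a single boundary bubble $U_{\eps_k,y_k}$ with $\eps_k\to0$, $y_k\to y_0\in\partial\Om$, and then the expansions $\beta(u_k)=\sddn\lb1-c_NH(y_k)\eps_k+o(\eps_k)\rb$ and $\delta(u_k)=c_N'\eps_k+o(\eps_k)$, which combine into
\[\Psi_k(u_k)=\sddn\lb1+\lb c_N'\,k-c_NH(y_k)\rb\eps_k+o(\eps_k)\rb.\]
Here the subcritical term $+c_N'k\eps_k$ competes at the same order with the curvature gain $-c_NH(y_k)\eps_k$; since $H(y_k)\le\max_{\partial\Om}H$ and $k\to\infty$, the bracket is eventually positive, so $\Psi_k(u_k)>\sddn$, contradicting $S_k<\sddn$. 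This both forces $\alpha_0<\infty$ and pins it to the balance $c_N'\alpha_0=c_N\max_{\partial\Om}H$. Carrying out this expansion rigorously, with remainders controlled uniformly as $k\to\infty$ and with multi-bubble and interior concentration excluded, is where essentially all the work lies; the monotonicity, the ceiling, and the two dichotomy arguments are comparatively soft once this estimate is in hand.
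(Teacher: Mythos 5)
Your overall strategy coincides with the paper's: the ceiling $S_\alpha\le\sddn$ via boundary instantons, attainment of $S_\alpha$ whenever $S_\alpha<\sddn$ by concentration--compactness, monotonicity and continuity of $\alpha\mapsto S_\alpha$ to define the threshold, the soft comparison argument ruling out minimizers for $\alpha>\alpha_0$, positivity of $\alpha_0$ from the mean-curvature expansion of boundary bubbles, and finiteness by a blow-up contradiction in which the subcritical term $\alpha_k\delta(u_k)$ eventually beats the curvature gain. The paper delegates the blow-up expansion to \cite{APY} and \cite{CG}, essentially as you do, so that part is comparable in completeness.

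The one genuine gap is in the compactness step. You assert that after the Br\'ezis--Lieb splitting ``the strict inequality forbids any mass escaping,'' as in the standard argument for the pure quotient $\beta$. But $\Psi_\alpha$ is not a ratio of quantities of matching homogeneity. Normalizing $|u_k|_{\ts}=1$ and writing $x=\nupss$ for the retained mass, the limit identity reads
$$S_\alpha\ \ge\ \beta(u)\,x^{2/\ts}+\sddn(1-x)^{2/\ts}+\alpha\gamma(u)\,x^{\tz/\ts}\sqrt{\beta(u)\,x^{2/\ts}+\sddn\,(1-x)^{2/\ts}},\qquad \gamma(u)=\iuz/\nupsz,$$
and the factor $x^{\tz/\ts}$ (coming from the strong $L^{\tz}$ convergence, with $2<\tz<\ts$) breaks the usual subadditivity conclusion: the inequality $\Psi_\alpha(u)\ge S_\alpha$ controls $\nhu+\alpha\,\iuz\,\pnhumeio\,|u|_{\ts}^{1-\tz}$ from below, whereas the limit identity contains $\alpha\,\iuz\,\pnhumeio$ \emph{without} the large factor $|u|_{\ts}^{1-\tz}\ge 1$, so the estimate does not close as written. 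This is exactly where the paper inserts its key observation: the right-hand side above dominates $g(x)=\beta x+\sddn(1-x)+\alpha\gamma x\sqrt{\beta x+\sddn(1-x)}$, which is concave once $\beta<\sddn$, hence minimized at an endpoint of $[0,1]$; since $g(0)=\sddn>S_\alpha$, one gets $S_\alpha\ge g(1)=\Psi_\alpha(u)$, i.e.\ the weak limit is already a minimizer. You need this (or an equivalent device) to make the step rigorous; the same device underlies the continuity of $\alpha\mapsto S_\alpha$, which you only assert. A minor overstatement elsewhere: the blow-up contradiction yields an upper bound on $\alpha_0$ of the form $c\max_{\partial\Om}H$, but it does not ``pin'' $\alpha_0$ to that balance, since minimizers for $\alpha$ near $\alpha_0$ need not be single boundary bubbles.
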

\begin{remark}
Obviously, $\alpha_{0}$ is a nonincreasing function of $a$.
By testing $\Psi_{\alpha}$ with constant functions and instantons we 
can prove that
$$
\alpha_{0}\geq\max\left\{
[{S/(2|\Om|)^{\frac{2}{N}}-a}]/{\sqrt{a}}, C(N)\max_{\partial\Om}H
\right\}
$$
where $|\Om|$ is the Lebesgue measure of $\Om$, $H$ is the mean 
curvature of $\partial\Om$ and $C(N)$ is a constant that only depends 
on $N$.  The least energy solutions might be constant 
($a^\frac{N-2}{4}$) for $a\leq S/(2|\Om|)^{\frac{2}{N}}$ if
$\alpha\leq [{S/(2|\Om|)^{\frac{2}{N}}-a}]/{\sqrt{a}}$.
\end{remark}
\begin{corollary} For all $u\in 
H^1(\Om)\setminus\{0\}$,
    $$\sddn\leq\frac\nhu\nupsq\lb1+\alpha_{0}
    \frac{\iuz}{\pnhumeio\cdot|u|_{\ts}^{\ts\!/2}}\rb.
    $$
\end{corollary}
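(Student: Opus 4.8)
The plan is to recognize the right-hand side of the asserted inequality as the value $\Psi_{\alpha_{0}}(u)$ and then to read off the conclusion directly from the definition of $\alpha_{0}$ supplied by Theorem~\ref{theorem}. First I would unwind the definitions of $\beta$ and $\delta$ to observe that, for every $u\in H^1(\Om)\setminus\{0\}$,
\[
\Psi_{\alpha_{0}}(u)=\beta(u)\lb 1+\alpha_{0}\,\delta(u)\rb
=\frac\nhu\nupsq\lb 1+\alpha_{0}\defdel\rb.
\]
Thus the inequality to be proved is nothing other than the pointwise bound $\Psi_{\alpha_{0}}(u)\geq S/2^{\frac{2}{N}}$.

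Next I would invoke the definition $\alpha_{0}=\min\{\alpha\mid S_{\alpha}=S/2^{\frac{2}{N}}\}$ from Theorem~\ref{theorem}. Because $\alpha_{0}$ is exhibited as a \emph{minimum}, it itself belongs to the set being minimized over, so $S_{\alpha_{0}}=S/2^{\frac{2}{N}}$. Recalling that $S_{\alpha_{0}}=\inf_{H^1(\Om)\setminus\{0\}}\Psi_{\alpha_{0}}$, we obtain $\Psi_{\alpha_{0}}(u)\geq S_{\alpha_{0}}=S/2^{\frac{2}{N}}$ for every nonzero $u$, which is precisely the claimed inequality.

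In this sense the corollary carries no content beyond the theorem: the entire substance lies in the theorem's assertion (part~(ii)) that $\inf\Psi_{\alpha_{0}}$ equals $S/2^{\frac{2}{N}}$ and that this constant is sharp, so that $\alpha_{0}$ cannot be decreased. There is no genuine obstacle in the deduction itself; the only point that must be handled with care is that the minimum defining $\alpha_{0}$ is actually attained, i.e.\ that $\alpha_{0}$ satisfies $S_{\alpha_{0}}=S/2^{\frac{2}{N}}$ rather than merely being an infimum of such $\alpha$. This is guaranteed by the monotonicity of $\alpha\mapsto S_{\alpha}$ (which is nondecreasing since $\delta\geq 0$) together with the closedness of the level condition established while proving the theorem, and it is what justifies writing $\alpha_{0}$ as a $\min$ in the statement.
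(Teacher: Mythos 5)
Your proof is correct and matches the paper's (implicit) argument: the corollary is stated without proof precisely because, once one recognizes the right-hand side as $\Psi_{\alpha_{0}}(u)$, it is an immediate consequence of $S_{\alpha_{0}}=\inf\Psi_{\alpha_{0}}=S/2^{\frac{2}{N}}$, which holds because $\alpha_{0}$ is exhibited in Theorem~\ref{theorem} as a minimum of $\{\alpha\mid S_{\alpha}=S/2^{\frac{2}{N}}\}$. Your closing remark correctly identifies the only delicate point (attainment of that minimum), which the paper secures via the continuity and monotonicity of $\alpha\mapsto S_{\alpha}$ in the sketch of the proof of the theorem.
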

\begin{corollary} For all $u\in H^1(\Om)$,
    $$\sddn\nupsq\leq\nhu+\alpha_{0}||u||\cdot|u|_{2}
    \qquad\mbox{and}\qquad\sddn\nupsq\leq
    \lb|\nabla u|_{2}+c_{a,\alpha_{0}}|u|_{2}\rb^2,
    $$
    with 
    $c_{a,\alpha_{0}}=\max\left\{\alpha_{0}/2,
    \sqrt{a+\alpha_{0}\sqrt{a}\,\,}\right\}$.
\end{corollary}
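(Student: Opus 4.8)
The plan is to obtain both inequalities directly from the preceding corollary, which for $u\in H^1(\Om)\setminus\{0\}$ reads $\sddn\leq\beta(u)\lb1+\alpha_0\delta(u)\rb$. First I would clear the denominator $\nupsq$ hidden in $\beta$ and the factor $\pnhumeio$ hidden in $\delta$, rewriting that statement in the homogeneous form
$$\sddn\nupsq\leq\nhu+\alpha_0\,\pnhumeio\,\frac{\iuz}{|u|_{\ts}^{\ts\!/2}},$$
valid for all $u\neq0$ and trivially for $u=0$. Both asserted inequalities then follow by bounding the last term.

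For the first inequality, the key observation is the exponent identity $\tz=1+\ts/2$, which holds because $\ts/2=\frac{N}{N-2}$ and hence $1+\ts/2=\frac{2(N-1)}{N-2}=\tz$. Writing $|u|^\tz=|u|\cdot|u|^{\ts/2}$ and applying the Cauchy--Schwarz inequality gives
$$\iuz=\int|u|\cdot|u|^{\ts/2}\leq\lb\int|u|^2\rb^{1/2}\lb\int|u|^{\ts}\rb^{1/2}=|u|_{2}\cdot|u|_{\ts}^{\ts\!/2}.$$
Therefore $\iuz/|u|_{\ts}^{\ts\!/2}\leq|u|_{2}$, and substituting into the homogeneous form yields $\sddn\nupsq\leq\nhu+\alpha_0\,\pnhumeio\cdot|u|_{2}$, which is the first claim.

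For the second inequality, I would use the elementary bound $\pnhumeio=\sqrt{|\nabla u|_{2}^2+a|u|_{2}^2}\leq|\nabla u|_{2}+\sqrt a\,|u|_{2}$ (from $\sqrt{x^2+y^2}\leq x+y$ for $x,y\geq0$) together with $\nhu=|\nabla u|_{2}^2+a|u|_{2}^2$. The first claim then gives
$$\sddn\nupsq\leq|\nabla u|_{2}^2+\alpha_0\,|\nabla u|_{2}\,|u|_{2}+\lb a+\alpha_0\sqrt a\rb|u|_{2}^2.$$
It remains to dominate this quadratic form in $(|\nabla u|_{2},|u|_{2})$ by the perfect square $\lb|\nabla u|_{2}+c\,|u|_{2}\rb^2=|\nabla u|_{2}^2+2c\,|\nabla u|_{2}\,|u|_{2}+c^2|u|_{2}^2$. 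Comparing coefficients, this requires $2c\geq\alpha_0$ and $c^2\geq a+\alpha_0\sqrt a$ simultaneously, which is exactly what the choice $c=c_{a,\alpha_0}=\max\{\alpha_0/2,\sqrt{a+\alpha_0\sqrt a}\}$ guarantees.

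The whole argument is elementary once the preceding corollary is available, so I do not expect a genuine obstacle; the two points requiring care are verifying the exponent identity $\tz=1+\ts/2$, which is precisely what makes Cauchy--Schwarz produce the interpolation $\iuz\leq|u|_{2}\cdot|u|_{\ts}^{\ts\!/2}$, and the final coefficient matching, which pins down the explicit constant $c_{a,\alpha_0}$ and accounts for its two-term maximum.
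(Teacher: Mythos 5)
Your proposal is correct and follows the same route as the paper: the paper's proof consists precisely of the H\"older/Cauchy--Schwarz interpolation $\iuz\leq|u|_{2}\cdot|u|_{\ts}^{\ts\!/2}$ (equivalently $\delta(u)\leq|u|_{2}/\pnhumeio$), applied to the preceding corollary, with the remaining elementary estimates left to the reader. Your filling in of those details --- the bound $\pnhumeio\leq|\nabla u|_{2}+\sqrt{a}\,|u|_{2}$ and the coefficient matching that produces $c_{a,\alpha_{0}}$ --- is exactly what is needed and is carried out correctly.
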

\begin{proof}
From H\"older's inequality
$|u|_{\tz}^\tz\leq |u|_{2}|u|_{\ts}^{{\ts\!/2}}$, so
$\delta(u)\leq\frac{|u|_{2}}{||u||}$.  
\end{proof}
\begin{corollary} (Cherrier's inequality). Let $\eps>0$.  For all $u\in H^1(\Om)$,
    $$\sddn\nupsq\leq(1+\eps)\nhu+\frac{\alpha_{0}^2}{4\eps}|u|_{2}^2=
    (1+\eps)\ngupq+\lb\frac{\alpha_{0}^2}{4\eps}+a\eps\rb|u|_{2}^2.$$
\end{corollary}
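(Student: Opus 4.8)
The plan is to read off Cherrier's inequality from the first inequality of the preceding Corollary, namely $\sddn\nupsq\leq\nhu+\alpha_{0}\pnhumeio\cdot|u|_{2}$, by controlling the single cross term $\alpha_{0}\pnhumeio\cdot|u|_{2}$ with a weighted Young inequality. All the substance is already contained in that estimate; the mixed product $\pnhumeio\cdot|u|_{2}$ is the only thing that keeps the coefficient of $\nhu$ away from $1$, and splitting it appropriately will move a controllable amount of mass onto the lower-order term $|u|_{2}^2$ while pushing the gradient coefficient down toward the sharp value.

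First I would record the elementary bound $XY\leq\eps Y^2+\frac{X^2}{4\eps}$, valid for all $X,Y\in\Rb$ and every $\eps>0$, since it is merely $\lb\sqrt{\eps}\,Y-\frac{X}{2\sqrt{\eps}}\rb^2\geq0$. Applying it with $Y=\pnhumeio$ and $X=\alpha_{0}|u|_{2}$ yields
$$
\alpha_{0}\pnhumeio\cdot|u|_{2}\leq\eps\nhu+\frac{\alpha_{0}^2}{4\eps}|u|_{2}^2,
$$
where the weight $\tfrac{1}{4\eps}$ is chosen exactly so that, once this is added to the leading term $\nhu$, the coefficient of $\nhu$ becomes precisely $1+\eps$.

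Substituting this into the preceding Corollary gives
$$
\sddn\nupsq\leq(1+\eps)\nhu+\frac{\alpha_{0}^2}{4\eps}|u|_{2}^2,
$$
and the second displayed form then follows by expanding the $H^1$-norm $\nhu=\ngupq+a|u|_{2}^2$. I do not expect any genuine obstacle at this stage: the content of the statement, in particular the sharp constant $\sddn$, is delivered by the Theorem and the two corollaries above, so this final deduction is a one-line optimization. The only point deserving care is the choice of the Young weight, because it is precisely this choice that drives the gradient coefficient down to the sharp Cherrier value $1+\eps$; for $u=0$ the inequality is trivial, and as an equally short alternative one may instead start from the second inequality of the preceding Corollary, $\sddn\nupsq\leq\lb|\nabla u|_{2}+c_{a,\alpha_{0}}|u|_{2}\rb^2$, and apply the same weighted estimate to its cross term $2\,c_{a,\alpha_{0}}\,|\nabla u|_{2}\,|u|_{2}$.
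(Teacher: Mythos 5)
Your proof is correct and is exactly the intended (implicit) argument: the paper deduces Cherrier's inequality from the first estimate of the preceding corollary by the same weighted Young inequality $\alpha_{0}\pnhumeio\cdot|u|_{2}\leq\eps\nhu+\frac{\alpha_{0}^2}{4\eps}|u|_{2}^2$. The only thing worth flagging is a typo in the statement itself, not in your argument: after expanding $\nhu=\ngupq+a|u|_{2}^2$ the coefficient of $|u|_{2}^2$ should be $a(1+\eps)+\frac{\alpha_{0}^2}{4\eps}$ rather than $a\eps+\frac{\alpha_{0}^2}{4\eps}$.
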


{\em Sketch of the proof of}\/ Theorem~\ref{theorem}. $-$
By testing $\Psi_{\alpha}$ with instantons,
$S_{\alpha}\leq\sddn$, for all $\alpha\geq 0$.
We claim that if $S_{\alpha}<\sddn$, then $S_{\alpha}$ is achieved. 
This is a consequence of the concentration-compactness principle.  A 
minimizing sequence $\uk$ with $|\uk|_{\ts}=1$ is bounded and we can 
assume $\uk\weak u$ in $H^1(\Om)$,
$\lim_{k\to\infty}\ngukpq=\ngupq+||\mu||$
and $\lim_{k\to\infty}\nukpss=\nupss+||\nu||=1$,
where $\sddn||\nu||^\frac 2\ts\leq
||\mu||$ (we remark that this inequality follows from 
Cherrier's inequality).
We can write
$\beta(u)\delta(u)=\gamma(u)\sqrt{\beta(u)}$ for
$\gamma(u)={\iuz}/{\nupsz}$.  The key step 
in the proof of the claim is 
the following observation.
Define $f$ and $g:[0,1]\to\Rb$, by
$$\begin{array}{lcl}
f(x)&:=&\textstyle
\beta x^\frac{2}{\ts}+\sddn(1-x)^\frac{2}{\ts}+\alpha\gamma 
x^\frac{\tz}{\ts}
\sqrt{\beta x^\frac{2}{\ts}+\sddn(1-x)^\frac{2}{\ts}}\\
&\geq& \textstyle
\beta x+\sddn(1-x)+\alpha\gamma x\sqrt{\beta x+\sddn(1-x)}
\ \ =:\ \ g(x).
\end{array}
$$
Suppose $\min f<\sddn$.  It follows that $\beta<\sddn$, and this in turn 
implies that 
$g$ is concave.  Since $f$ and $g$ coincide at 0 and 1, the minimum of 
$f$ occurs at 1.  This proves the claim.

A similar argument
shows
$\alpha\mapsto S_{\alpha}$
is continuous.  In particular, the supremum 
$\alpha_{0}:=\sup\{\alpha|S_{\alpha}<S/2^\frac 2N\}$ is either $+\infty$ or a 
maximum.
The map $\alpha\mapsto S_{\alpha}$ is strictly increasing on 
$[0,\alpha_{0}]$.
If $\alpha\in[0,\alpha_{0}[$, then \rf\ has a least energy solution. 
    If $\alpha\in]\alpha_{0},+\infty[$, then \rf\ does 
    not have a least energy solution.  It remains to prove that 
    $\alpha_{0}$ is finite.  
    
    Suppose 
    $\alpha_{0}=+\infty$.  Choose $\alpha_{k}\to+\infty$ as $k\to+\infty$ and
    let $\uk$ be minimizer for $S_{\alpha_{k}}$ satisfying 
    \rfk.
    It is easy to prove that 
    $\lim_{\alpha\to\infty}S_{\alpha}=S/2^\frac 2N$, 
    $\mk:=\max_{\bar\Om}\uk=\uk(P_{k})\to+\infty$,
    $\lim_{k\to\infty}\ngukpq=\lim_{k\to\infty}\nukpss=S^\frac 
    N2/2$
    and $\ak\delta(\uk)\to 0$.  We can apply the Gidas-Spruck blow up 
    technique to \rfk\ because $\ak\delta(\uk)\to 0$.  
    Define $
    \eps_{k}:=M_{k}^{-{2}/(N-2)}$ and
      $U_{\eps,y}:=\eps^{-\frac{N-2}{2}}U\left(\frac{x-y}{\eps}\right)$.
    We can prove that
    $
    \lim_{k\to\infty}\ak\eps_{k}=0$,
    $
    \lim_{k\to\infty}|\nabla\uk-\nabla U_{\eps_{k},P_{k}}|_{2}=0
    $
    and $\pkk\in\partial\Om$, for large $k$.
        
At this point, using the ideas of \cite{APY}, we follow the argument in 
\cite{CG}, which applies with no modification.  We 
show 
$\Psi_{\ak}(\uk)>\sddnd$, for large $k$.  This is impossible.  
Therefore $\alpha_{0}$ is finite.\hfill$\Box$

\begin{remark}
The functional behavior
behind this inequality is also present, for example, in 
the Dirichlet problem for $-\Delta u-au+\alpha u^{1/3}=u^{7/3}$
in $\Om\subset\Rb^5$, with
$0<a<\lambda_{1}\left(-\Delta,H^1_{0}(\Om)\right)$. In this case 
$s:=[t(u)]^{2/3}$ is the
solution of a cubic equation,
$(|\nabla u|_{2}^2-a|u|_{2}^2)s+\alpha|u|_{4/3}^{4/3}-|u|_{10/3}^{10/3}s^3=0$.
\end{remark}


\begin{thebibliography}{99}
\bibitem{AM} Adimurthi, Mancini G., The Neumann problem
for elliptic equations with critical nonlinearity, A tribute in
honour of G.\ Prodi, Scuola Norm.\ Sup.\ Pisa (1991) 9-25.
\bibitem{APY} Adimurthi, Pacella F., Yadava  S.L., Interaction
between the geometry of the boundary and positive solutions of a
semilinear Neumann problem with critical nonlinearity, J.\ Funct.\
Anal.\ 113 (1993) 318-350.
\bibitem{BL} Br\'{e}zis H., Lieb E., Sobolev inequalities 
with remainder terms, J.\ Funct.\
Anal.\ 62 (1985) 73-86.
\bibitem{BN} Br\'{e}zis H., Nirenberg L., Positive 
Solutions of Nonlinear Elliptic Equations Involving Critical Sobolev 
Exponents, Comm.\ on Pure and Appl.\ Math., Vol.\ 36 (1983) 437-477.
\bibitem{CW} Chabrowski J., Willem M., 
Least energy solutions for the critical Neumann problem with a weight,
to appear in
Calc.\ Var.
\bibitem{Ch} Cherrier P., Probl\'{e}mes de Neumann nonlin\'eaires
sur des vari\'{e}t\'{e}s Riemannienes, J.\ Funct.\ Anal.\ 57
(1984) 154-207.
\bibitem{CG} Costa D.G., Gir\~{a}o  P.M., 
Existence and nonexistence of
    least energy solutions of the Neumann problem for a 
    semilinear elliptic equation with critical Sobolev exponent
    and a critical lower-order perturbation, to appear.
\bibitem{L1} Lions P.L., The concentration-compactness principle in
the calculus of variations, The limit case, Revista Math.\
Iberoamericana 1 No.\ 1 and No.\ 2 (1985) 145-201 and 45-120.
\bibitem{WX} Wang X.J., Neumann problems of semilinear elliptic
equations involving critical Sobolev exponents, J.\ Diff.\ Eq.\ 93
(1991) 283-310.
\bibitem{W2} Wang Z.Q., Existence and nonexistence of G-least
energy solutions for a nonlinear Neumann problem with critical
exponent in symmetric domains, Calc.\ Var.\ 8 (1999) 109-122.
\bibitem{Zh} Zhu M., Sharp Sobolev inequalities with interior
norms, Calc.\ Var.\ 8 (1999) 27-43.
\end{thebibliography}
\end{document}